\newtheorem{thm}{Theorem}[]
\newtheorem{lem}[thm]{Lemma}
\newtheorem{cor}[thm]{Corollary}
\theoremstyle{definition}
\newtheorem{defn}[thm]{Definition}
\theoremstyle{remark}
\newtheorem{remark}[thm]{Remark}
\newtheorem{remarks}[thm]{Remarks}
\newcounter{substep}
\def\thesubstep{\arabic{substep}}
\newcounter{subsubstep}
\def\thesubsubstep{\arabic{subsubstep}}
\newcommand{\ideal}{{\mathcal I}}
\newcommand{\zeri}{{\mathcal Z}}
\newcommand{\gta}{{\mathfrak a}} \newcommand{\gtb}{{\mathfrak b}}
\newcommand{\gtg}{{\mathfrak g}}
\newcommand{\Fhaz}{{\EuScript F}}
\newcommand{\an}{{\EuScript O}}
\newcommand{\stb}{{\EuScript E}}
\newcommand{\supp}{\operatorname{supp}}
\newcommand{\Int}{\operatorname{Int}}
\begin{document}

\title[$C^\infty$ Nullstellensatz]{A Nullstellensatz for \L ojasiewicz ideals}


\author{Francesca Acquistapace, Fabrizio Broglia,}

\address{Dipartimento di Matematica, Universit\`a degli Studi di Pisa, Largo Bruno Pontecorvo, 5, 56127 Pisa, Italy}

\email{acquistf@dm.unipi.it, broglia@dm.unipi.it}

\author{Andreea Nicoara}

\address{Department of Mathematics, University of Pennsylvania, 209 South $33^{rd}$ St.,  Philadelphia, PA 19104}

\email{anicoara@math.upenn.edu}

\thanks{}


\date{October 9, 2012}

\subjclass[2010]{Primary 26E05, 26E10, 46E25; Secondary 11E25, 32C05, 14P15.}

\keywords{Nullstellensatz, closed ideal, real Nullstellensatz, radical ideal, \L ojasiewicz radical ideal, real ideal, real analytic ideal, Whitney closure, saturation of an ideal}

\begin{abstract}

For an  ideal of  smooth functions  $\gta$ that is  either \L  ojasiewicz or
weakly \L  ojasiewicz, we give a  complete characterization of  the ideal of
functions  vanishing on its  variety $\ideal(\zeri(\gta))$  in terms  of the
global \L ojasiewicz radical and Whitney  closure. We also prove that the \L
ojasiewicz radical of such an ideal is {\sl analytic-like} in the sense that
its saturation  equals its Whitney closure.  This allows us to  recover in a
different way  Nullstellensatz results due  to Bochnak and  Adkins-Leahy and
answer positively  a modification of  the Nullstellensatz conjecture  due to
Bochnak.

\end{abstract}


\maketitle

\tableofcontents

\section{Introduction}\label{intro}

In  this paper  we  characterize a  class  of ideals  $\gta$  having the  zero
property in the algebra $\stb (M)$ of real-valued smooth functions on a smooth
manifold $M$.  Recall  that an ideal $\gta$ has the {\it  zero property} if it
coincides with  the ideal $\ideal(\zeri(\gta))$ of all  functions vanishing on
its zero set.

This  investigation of  such a  Nullstellensatz  for the  class of  $C^\infty$
functions was initiated by Bochnak  in 1973 in \cite{bochnak} and subsequently
continued by Risler in  \cite{risler}. Interesting contributions by Adkins and
Leahy can be found in \cite{al1} and \cite{al2}.

In particular, Bochnak  formulated the following conjecture:

\bf Conjecture.  \label{conj} \sl Let $\gta$  be a finitely  generated ideal in $\stb (M)$. Then the following are equivalent

\begin{enumerate}

\item $\gta$ has the zero property.

\item $\gta$ is closed and real.

\end{enumerate}

\rm

He proved  his conjecture when $\gta$  is generated by  finitely many analytic
functions. Then Risler in \cite{risler} gave a complete answer for dimension 2
and for  principal ideals in dimension  3. Finally, for an  ideal generated by
analytic functions (not necessarily finitely  many), Adkins and Leahy prove in
\cite{al1} that  $\ideal(\zeri(\gta))$ is the  closure of the real  radical of
$\gta$.

Note that a closed finitely  generated ideal is \L ojasiewicz, (see definition
below), but the converse is not true (\cite{tougeron} p.104 example 4.8).

In   Theorem   \ref{main},   we    give   a   complete   characterization   of
$\ideal(\zeri(\gta))$ for  the case  when $\gta$ is  a \L ojasiewicz  ideal in
terms of a particular notion of radical called the {\em \L ojasiewicz radical}
that can  be bigger than the real  radical. It is defined  below in Definition
\ref{L  radical}. \L  ojasiewicz  ideals were  considered  by several  authors
including  Malgrange  in   \cite[\S6]{malgrange},  Thom  in  \cite{thom},  and
Tougeron in \cite[p.104]{tougeron}. The  \L ojasiewicz radical appears in work
by Kohn  \cite[Thm.1.21]{kohnacta} and Nowak \cite{nowak}, though  mainly as a
notion applied to ideals of germs.

As  a consequence  of Theorem  \ref{main}, we  give an  answer to  the Bochnak
conjecture in terms of convexity:

\begin{thm}

Let $\gta$  be a \L ojasiewicz ideal  in $\stb (M)$. Then  the following are equivalent

\begin{enumerate}

\item $\gta$ has the zero property.

\item $\gta$ is closed, convex, and  radical.

\end{enumerate}

\end{thm}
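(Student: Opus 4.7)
The plan is to deduce this theorem directly from Theorem \ref{main}, which characterizes $\ideal(\zeri(\gta))$ as the Whitney closure of the \L ojasiewicz radical of $\gta$.

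\textbf{Easy direction $(1)\Rightarrow(2)$.} Assume $\gta=\ideal(\zeri(\gta))$. I would check each property separately for the ideal $\ideal(Z):=\{f\in\stb(M):f|_Z\equiv 0\}$ of functions vanishing on an arbitrary subset $Z\subseteq M$. It is \emph{closed} in the Whitney $C^\infty$ topology because any such convergent sequence converges pointwise on $M$ and hence on $Z$. It is \emph{convex} in the sense that $|g|\le|f|$ pointwise with $f\in\ideal(Z)$ forces $g\in\ideal(Z)$ (immediate from the pointwise definition). It is \emph{radical} because $f^n|_Z\equiv 0$ implies $f|_Z\equiv 0$. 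Applied to $Z=\zeri(\gta)$, this gives all three properties for $\gta$.

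\textbf{Harder direction $(2)\Rightarrow(1)$.} The plan is to show, successively, that
\begin{enumerate}
\item the \L ojasiewicz radical of $\gta$ coincides with $\gta$, whenever $\gta$ is convex and radical;
\item the Whitney closure of $\gta$ coincides with $\gta$, whenever $\gta$ is closed.
\end{enumerate}
Combining these two equalities with Theorem \ref{main} gives $\ideal(\zeri(\gta))=\gta$, which is precisely the zero property.

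Step (2) is tautological once one identifies ``closed'' in the statement with closed in the Whitney $C^\infty$ topology used to define the Whitney closure. For step (1), the definition of the \L ojasiewicz radical (Definition \ref{L radical}) should say essentially that $f$ belongs to it precisely when some power $f^N$ admits a \L ojasiewicz-type bound $|f^N|\le C\,|h|$ for some $h\in\gta$ (up to the standard rephrasing in terms of finitely many generators). Convexity of $\gta$ then yields $f^N\in\gta$, and radicality upgrades this to $f\in\gta$. In the reverse inclusion one simply notes that $\gta$ is contained in its \L ojasiewicz radical by definition.

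\textbf{Main obstacle.} The only non-routine point is verifying step (1). This depends on matching the exact form of the \L ojasiewicz bound appearing in Definition \ref{L radical} with the form of convexity used in condition (2): if convexity is expressed as ``$|g|\le|f|$ with $f\in\gta$ implies $g\in\gta$'', the \L ojasiewicz inequality must be read so that the dominating function is (or can be replaced by) an element of $\gta$ itself rather than just a sum of absolute values of generators. Provided the \L ojasiewicz radical is defined in the usual way for \L ojasiewicz ideals, so that a single dominating element in $\gta$ can be extracted (using that $\gta$ is \L ojasiewicz, hence admits sums of squares of generators with good bounds), this matching is straightforward and step (1) goes through.
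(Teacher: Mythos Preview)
Your proof is correct and follows essentially the same route as the paper: the paper derives this theorem as an immediate corollary of Theorem~\ref{main} by observing that a convex radical ideal coincides with its \L ojasiewicz radical (stated there as $\sqrt[\text{\L}]{\gta}=\sqrt{\gtg(\gta)}$, where $\gtg(\gta)$ is the convex hull), and then closedness gives $\overline{\gta}=\gta$.

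Your ``main obstacle'' is not actually an obstacle. With the paper's Definition~\ref{L radical}, $g\in\sqrt[\text{\L}]{\gta}$ means precisely $g^{2m}\le f$ for some $f\in\gta$; the paper's convexity condition is $|g|\le f$ with $f\in\gta$ implies $g\in\gta$, so convexity gives $g^{2m}\in\gta$ directly and radicality gives $g\in\gta$, with no need to invoke the \L ojasiewicz property of $\gta$ at this step (that hypothesis is used only inside Theorem~\ref{main}).
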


In fact, we obtain our result for ideals $\gta$ with countably many generators
but  still  verifying  condition  (2)  of Definition  \ref{Loj}.  See  Theorem
\ref{weak L}.

Note  that  a  convex radical  ideal  is  a  real  ideal.  If we  had  a  good
representation  of positive  semidefinite functions  as sums  of  squares, the
converse would  also be true. Nevertheless,  this converse happens  to be true
when  $\gta$  is  generated by  analytic  functions  as  we prove  in  Theorem
\ref{comp}. We thus recover the results of Bochnak and Adkins-Leahy.



\medskip




\section{\L ojasiewicz ideals}\label{loji}

\medskip

Let  $M$ be  a  smooth manifold  and  $ \stb  (M)$ be  its  algebra of  smooth
real-valued functions endowed with the compact open topology.

The {\sl saturation} of  an ideal $\gta$ in $ \stb (M)$  is the ideal $ \tilde
\gta = \{ g\in \stb (M) \, | \,\forall x \in M\ g_x \in \gta \stb_x \}.$

\begin{lem}\label{satclos}

$\gta\subset \tilde \gta \subset  \overline \gta$

\end{lem}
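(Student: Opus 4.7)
The first inclusion $\gta\subset\tilde\gta$ is immediate from the definition: for any $g\in\gta$ and any $x\in M$, the germ $g_x$ lies in $\gta\stb_x$, so $g\in\tilde\gta$.

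For the harder inclusion $\tilde\gta\subset\overline\gta$, the plan is to approximate any $g\in\tilde\gta$ by a sequence $(g_n)\subset\gta$ in the compact-open topology, exploiting local finiteness of a partition of unity. Fix $g\in\tilde\gta$. By definition, for every $x\in M$ there exist finitely many $f_1^{(x)},\dots,f_{k(x)}^{(x)}\in\gta$ and germs $h_i^{(x)}\in\stb_x$ such that $g_x=\sum_{i=1}^{k(x)}h_i^{(x)}(f_i^{(x)})_x$. Hence there is an open neighborhood $U_x$ of $x$ on which representatives $h_i^{(x)}\in\stb(U_x)$ can be chosen so that $g=\sum_{i=1}^{k(x)}h_i^{(x)}f_i^{(x)}$ on $U_x$.

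Next I would select a locally finite refinement $\{U_\alpha\}_{\alpha\in A}$ of $\{U_x\}_{x\in M}$ together with a smooth partition of unity $\{\varphi_\alpha\}_{\alpha\in A}$ subordinate to it, and for each $\alpha$ pick corresponding data $f_i^{(\alpha)}\in\gta$, $h_i^{(\alpha)}\in\stb(U_\alpha)$ with $g=\sum_{i}h_i^{(\alpha)}f_i^{(\alpha)}$ on $U_\alpha$. Since $\supp\varphi_\alpha\subset U_\alpha$, each product $\varphi_\alpha h_i^{(\alpha)}$ extends by zero to a smooth function on $M$, so any \emph{finite} sum
\[
g_F:=\sum_{\alpha\in F}\sum_{i=1}^{k(\alpha)}\bigl(\varphi_\alpha h_i^{(\alpha)}\bigr)f_i^{(\alpha)}
\]
belongs to $\gta$. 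Now fix an exhaustion $K_1\subset K_2\subset\cdots$ of $M$ by compact sets with $K_n\subset\Int K_{n+1}$ and $\bigcup K_n=M$, and let $F_n:=\{\alpha\in A:\supp\varphi_\alpha\cap K_n\neq\varnothing\}$, which is finite by local finiteness. Setting $g_n:=g_{F_n}\in\gta$, one checks that on $K_n$ only the indices in $F_n$ contribute to $\sum_\alpha\varphi_\alpha=1$, so on $K_n$,
\[
g_n=\sum_{\alpha\in F_n}\varphi_\alpha\sum_{i}h_i^{(\alpha)}f_i^{(\alpha)}=\sum_{\alpha\in F_n}\varphi_\alpha\,g=g.
\]

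Consequently, for every compact $K\subset M$ we have $K\subset K_n$ for all sufficiently large $n$, so $g_n$ agrees with $g$ on $K$ (together with all derivatives). This shows $g_n\to g$ in the compact-open topology, hence $g\in\overline\gta$. The only delicate point is the local representation step, which is a purely algebraic consequence of $g_x\in\gta\stb_x$ combined with the fact that a single germ equation involves only finitely many $f_i^{(x)}\in\gta$; the patching then becomes automatic because of local finiteness, so no real analytic obstruction arises.
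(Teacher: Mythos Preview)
Your argument is correct. The construction with a locally finite partition of unity is sound: each $g_n$ is a \emph{finite} $\stb(M)$-linear combination of elements of $\gta$, hence lies in $\gta$, and your observation that $g_n=g$ on $\Int K_n$ gives convergence in the $C^\infty$ topology (all derivatives included).

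However, your route differs from the paper's. The authors do not build an approximating sequence at all; instead they invoke the \emph{Whitney spectral theorem}, which identifies $\overline{\gta}$ with $\gta^*:=\{g\in\stb(M):T_xg\in T_x\gta\text{ for all }x\}$, and then note the trivial inclusion $\tilde\gta\subset\gta^*$ (if $g_x\in\gta\stb_x$ then $T_xg\in T_x\gta$). Your proof is more elementary and self-contained, avoiding the deep spectral theorem; it also exhibits explicitly that any $g\in\tilde\gta$ is a limit of elements of $\gta$ that \emph{coincide with $g$} on arbitrarily large compacta. The paper's proof is shorter and ties the lemma to the formal-power-series viewpoint used later (the ideal $\gta^*$ and the notion of ``analytic-like''), so it sets up machinery that reappears in Remarks~\ref{alike} and Theorem~\ref{comp}. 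Incidentally, your partition-of-unity construction is essentially the same mechanism the paper employs later, in the proof of Remarks~\ref{alike}(3).
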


\begin{proof} Consider the ideal 
$$ \gta^* = \{ g\in \stb (M) \, | \,\forall x \in M\ T_x g \in  T_x\gta \}.$$
Whitney spectral theorem says $\gta^* = \overline{\gta}$, and the proof
follows since $\tilde \gta \subset \gta^*$.

\end{proof}

\begin{remarks}\label{alike}

\noindent

\begin{enumerate}

\item  Both inclusions  are strict  in  general; consult  Adkins-Leahy
\cite[p.708]{al1} for an example.

\item If  $M$ is  analytic and $g$  belongs to  the ring $\an(M)$  of analytic
functions on $M$,  then $g_x$ can be identified with  $T_x g$. A consequence
of this  fact is that for  any ideal $\gta \subset  \an(M),$ $ \gta^*=\tilde
\gta,$ where the two operations on $\gta$ are performed in the ring $\an(M)$
only. In this case $\gta^*$ is not the closure in the compact open topology;
see \cite{debbumi}.  From now on, we will call {\sl analytic-like} any ideal
in $\stb (M)$ verifying $ \gta^*=\tilde \gta.$


\item  Note that

\begin{equation*}
\begin{split}
\tilde \gta &= \{g\in\stb (M) \, | \,\forall \,\text{ compact }  K\subset M \ \exists\,h\in \stb (M)\:\text{s.t.}\:\zeri(h)\cap K=\varnothing \: \text{and} \:hg\in\gta\} \\&= \{g\in \stb (M) \, | \,\forall\,x\in M\,\exists\,h\in \stb (M)\:\text{s.t.}\: h(x)\neq0 \: \text{and} \:hg\in\gta\}.
\end{split}
 \end{equation*}

 \begin{proof} It is clear that both  the second and third sets are subsets of
$ \tilde  \gta.$ It is also  clear that the second  set is a  subset of the
third. Therefore,  the three-way equality  reduces to proving  that $\tilde
\gta$ is  a subset  of the second  set. Let  $g\in \tilde \gta$.  Given any
compact subset $K$  of $M,$ let $x \in K.$  It follows $g_x\in \gta\stb_x$,
and in a suitable  neighborhood $U_x$ of $x,$ $$g = \alpha_1  f_1 + \dots +
\alpha_k  f_k.$$ Take  a  bump  function $\varphi$  such  that $x\in  \supp
\varphi \subset  U_x$ and $\varphi(y) \neq 0$  for any $y$ in a smaller
open set $ V_x.$ 



Then $$\varphi  g = (\varphi\alpha_1)  f_1 + \dots +  (\varphi\alpha_k) f_k
\in \gta.$$  $\{V_x\}_{x \in K}$ is an  open cover of the  compact set $K.$
Take a  finite subcover $V_{x_1},  \dots, V_{x_j}$ with  corresponding bump
functions  $\varphi_1, \dots,  \varphi_j.$ Summing  up the  expressions for
$\varphi_1 g,  \dots, \varphi_j  g,$ we obtain  that $(\varphi_1+  \cdots +
\varphi_j)g$ is  a finite  sum of elements  of $\gta$ with  coefficients in
$\stb (M).$ As $(\varphi_1+ \cdots +\varphi_j)(y) \neq 0$ for all $y \in K$
by construction,  we set  $h = \varphi_1+  \cdots +\varphi_j$  and conclude
that $g$ is an element of  $$\{g\in\stb (M) \, | \,\forall \,\text{ compact
}   K\subset   M   \  \exists\,h\in   \stb   (M)\:\text{s.t.}\:\zeri(h)\cap
K=\varnothing \: \text{and} \:hg\in\gta\}$$ as needed.

\end{proof}

\end{enumerate}

\end{remarks}

\begin{defn}\label{Loj}

An ideal  $\gta \subset \stb (M)$ is a {\em  \L  ojsiewicz  ideal} if

\begin{enumerate}

\item $\gta$ is generated by finitely many smooth functions $f_1,\ldots,f_l;$

\item $\gta$ contains an element $f$ with the property that for any compact $K\subset  M$ there exist a constant $c$ and an integer $m$ such that $|f(x)|\geq c\, d(x,  \zeri(\gta))^{m}$ on an open neighborhood of $K$.

\end{enumerate}

\end{defn}

\begin{remark} It is well known that in the definition above one can take $f$
to be the sum of squares of the generators $f_1^2 +\dots +f_l^2$. This can be seen as follows: $f_1, \ldots, f_l$ cannot be simultaneously flat at any point in $M$; otherwise, $f$ would be flat at some point of its zero set,
hence it could not verify the inequality in the definition. So $f_1^2 +\dots
+f_l^2$ is nowhere flat and dominates $C \,|f|^2$ on every compact set of $M$ for $C >0$ appropriately chosen. It thus verifies the required inequality with exponent $2m.$


\end{remark}

\begin{lem}\label{L disug}

Let  $\gta$ be  a  \L  ojasiewicz ideal  generated  by $f_1,\ldots,f_l$  and
$f=f_1^2+\dots +f_l^2$.   Let $g\in \stb (M)$ be  such that $\zeri(g)\supset
\zeri(f) = \zeri(\gta)$. Then for  any compact set $K\subset M,$ there exist
a constant  $c$ and an  integer $m$  such that $g^{2m}  \leq cf$ on  an open
neighborhood  of $K$.  In  particular, there  exist  an integer  $m$ and  an
element $a\in \gta$  such that $g^{2m} \leq |a|$ on  an open neighborhood of
$K$.

\end{lem}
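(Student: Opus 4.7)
The plan is to combine the \L ojasiewicz inequality satisfied by $f$ with the Lipschitz regularity of the smooth function $g$. The decisive use of the hypothesis $\zeri(g) \supset \zeri(\gta)$ is that $g$ vanishes on $\zeri(\gta)$, so a first-order mean value estimate yields $|g(x)| \leq L \cdot d(x, \zeri(\gta))$ for $x$ close enough to $\zeri(\gta)$. Once this linear bound is in hand, a simple power-counting against the \L ojasiewicz estimate for $f$ gives the lemma.

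First I would shrink to an open neighborhood $U$ of $K$ with $\bar U$ compact and contained in a slightly larger open set $V$ on which the \L ojasiewicz inequality $f(x) \geq c \cdot d(x, \zeri(\gta))^{m_0}$ holds for some integer $m_0 \geq 1$. (If $\zeri(\gta) = \varnothing$ the conclusion is immediate: $f$ is bounded below by a positive constant on $\bar U$, so any $g^{2m}/f$ is bounded there.) Since $g$ is smooth, it is Lipschitz on $\bar V$ with some constant $L$. Setting $r = d(\bar U, M \setminus V) > 0$, for any $x \in \bar U$ with $d(x, \zeri(\gta)) < r$ a nearest point $y \in \zeri(\gta)$ to $x$ lies in $\bar V$, and using $g(y) = 0$ one obtains $|g(x)| = |g(x) - g(y)| \leq L \cdot d(x, \zeri(\gta))$.

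Next, pick an integer $m$ with $2m \geq m_0$ and split $\bar U$ into two subregions. Write $d(x) = d(x, \zeri(\gta))$, $D = \max_{\bar U} d$, $M_1 = \max_{\bar U} |g|$. On $\{d(x) < r\}$, combine the two estimates:
\[
g(x)^{2m} \leq L^{2m} d(x)^{2m} \leq L^{2m} D^{2m - m_0} \cdot d(x)^{m_0} \leq \frac{L^{2m} D^{2m - m_0}}{c}\, f(x).
\]
On $\{d(x) \geq r\}$, the \L ojasiewicz inequality yields $f(x) \geq c r^{m_0}$, so $g(x)^{2m} \leq M_1^{2m} \leq \bigl(M_1^{2m}/(c r^{m_0})\bigr)\, f(x)$. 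Taking $c'$ to be the larger of the two constants gives $g^{2m} \leq c' f$ on $U$. For the final assertion set $a = c' f \in \gta$; since $a \geq 0$ this reads $g^{2m} \leq |a|$ on $U$.

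The main technical subtlety is ensuring that the nearest point in $\zeri(\gta)$ to a given $x \in \bar U$ actually lies inside the region $\bar V$ where Lipschitz control of $g$ is available, which is precisely why one isolates the threshold radius $r$ and splits $\bar U$ into two subregions; on the ``far'' region the bound is trivial because $f$ is bounded below, while on the ``near'' region the Lipschitz estimate applies and combines cleanly with \L ojasiewicz.
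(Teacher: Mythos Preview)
Your proof is correct and follows essentially the same approach as the paper's: obtain $|g(x)|\leq L\,d(x,\zeri(\gta))$ from the Lipschitz (mean-value) estimate for the smooth function $g$ vanishing on $\zeri(\gta)$, then feed this into the \L ojasiewicz inequality for $f$. In fact your write-up is somewhat more careful than the paper's, since you explicitly treat the region far from $\zeri(\gta)$ (where $f$ is bounded below) and the subtlety of keeping the nearest point inside the region of Lipschitz control, points the paper leaves implicit; just be sure to choose $V$ with $\bar V$ compact so that the Lipschitz bound on $\bar V$ is justified.
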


\begin{proof}

Let $X= \zeri(\gta)$. Assume $0\in X$.  Then for $x,y$ close to $0,$ one has
$|g(x)  - g(y)|  \leq c_1  \, d(x,y)$,  where $c_1$  is a  suitable positive
constant. Now $d(x,X) = \inf_{y\in  X}d(x,y),$ so since $g$ vanishes on $X,$
we obtain  $|g(x)| \leq  c_1\, d(x,X)$  on a neighborhood  of $0.$  Once the
compact set $K$ is fixed, we can find finitely many points $y_1,\ldots, y_s$
and for each  of them an open set $V_i$ where  the previous inequality holds
and  such that  $X\cap K\subset  V= \cup  V_i$. So  for a  suitable constant
$c_2,$  we  have $|g(x)|  \leq  c_2 \,  d(x,X)$  and  hence $g(x)^{2m}  \leq
c_2^{2m} d(x,X)^{2m}$ on $V$ for any $m.$ Also, if $K' \supset K$ is another
compact set such that $V\subset K',$ on a neighborhood of $K'$ one has $c \,
d(x,X)^{2m}  \leq  f(x)$.  Putting  together these  inequalities,  one  gets
$g^{2m} \leq c_3 f$ in $V,$ where $c_3 = \frac{c_2^{2m}}{c}.$

\end{proof}


The previous lemma prompts us to globalize the definition of the \L ojasiewicz
radical.

\begin{defn}\label{L radical}

\rm The {\em \L ojasiewicz radical}  of an ideal $\gta \in \stb(M)$ is given
by  $$\sqrt[\text{\L}]{\gta}:=\{g\in\stb   (M)\,  |  \,  \exists\,f\in\gta\:
\text{and} \: m\geq1\:\text{such that}\: f-g^{2m}\geq0\}.$$

\end{defn}

It is not hard to verify that $\sqrt[\text{\L}]{\gta}$ is a radical real ideal
for any ideal $\gta$.


\medskip





We can now prove our main result:

\begin{thm}\label{main}

Let $\gta \subset \stb (M)$ be a \L ojasiewicz ideal. Then

\begin{itemize}

\item    $\sqrt[\text{\L}]{\gta}$   is    analytic-like,    i.e.$   \widetilde
{\sqrt[\text{\L}]{\gta}}=\overline{\sqrt[\text{\L}]{\gta}}$

\item $\ideal(\zeri(\gta)) = \overline{\sqrt[\text{\L}]{\gta}}$.

\end{itemize}

\end{thm}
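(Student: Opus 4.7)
The plan is to prove both bullet points at once by establishing the chain of inclusions
$$\sqrt[\text{\L}]{\gta} \;\subseteq\; \widetilde{\sqrt[\text{\L}]{\gta}} \;\subseteq\; \overline{\sqrt[\text{\L}]{\gta}} \;\subseteq\; \ideal(\zeri(\gta)) \;\subseteq\; \widetilde{\sqrt[\text{\L}]{\gta}},$$
which forces equality throughout and yields simultaneously the analytic-like property and the explicit description of $\ideal(\zeri(\gta))$.

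The first two inclusions are exactly Lemma \ref{satclos} applied to $\sqrt[\text{\L}]{\gta}$. For the third, note that any $g \in \sqrt[\text{\L}]{\gta}$ satisfies $0 \leq g^{2m} \leq f$ for some $f \in \gta$, and since $f$ vanishes on $\zeri(\gta)$ so does $g$; thus $\sqrt[\text{\L}]{\gta} \subseteq \ideal(\zeri(\gta))$. The ideal $\ideal(\zeri(\gta))$ is the intersection of kernels of point evaluations, hence closed in the compact-open topology, so the closure of $\sqrt[\text{\L}]{\gta}$ remains inside it.

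The substantive step is the last inclusion $\ideal(\zeri(\gta)) \subseteq \widetilde{\sqrt[\text{\L}]{\gta}}$, and this is where the \L ojasiewicz hypothesis is really used. Using the pointwise description of the saturation from Remark \ref{alike}(3), it suffices, for every $x \in M$ and every $g \in \ideal(\zeri(\gta))$, to exhibit $h \in \stb(M)$ with $h(x) \neq 0$ and $hg \in \sqrt[\text{\L}]{\gta}$. Put $f = f_1^2 + \cdots + f_l^2 \in \gta$, which is nonnegative everywhere and has the \L ojasiewicz property. Applying Lemma \ref{L disug} with $K = \{x\}$ yields an open neighborhood $V$ of $x$, an integer $m$, and a constant $c > 0$ such that $g^{2m} \leq cf$ on $V$. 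Choose a bump function $\varphi \in \stb(M)$ with $0 \leq \varphi \leq 1$, $\supp \varphi \subset V$, and $\varphi(x) = 1$. Then
$$(\varphi g)^{2m} = \varphi^{2m} g^{2m} \leq \varphi^{2m}\, cf \leq cf \quad \text{on } V,$$
while outside $V$ the left side vanishes and $cf \geq 0$, so $cf - (\varphi g)^{2m} \geq 0$ holds globally with $cf \in \gta$. Taking $h = \varphi$ gives $h(x) = 1$ and $hg \in \sqrt[\text{\L}]{\gta}$, as required.

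The main obstacle is precisely this last inclusion, since it is the only step where one must produce an element of $\sqrt[\text{\L}]{\gta}$ out of a merely vanishing function. The key technical point that makes the bump argument succeed is that $f$ is globally nonnegative (being a sum of squares), which lets us upgrade the local inequality provided by Lemma \ref{L disug} into a global domination, and that multiplication by the bump $\varphi$ preserves the inequality because $0 \leq \varphi \leq 1$ implies $\varphi^{2m} \leq 1$.
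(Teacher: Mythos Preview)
Your proof is correct and follows essentially the same route as the paper: both reduce everything to the inclusion $\ideal(\zeri(\gta))\subseteq\widetilde{\sqrt[\text{\L}]{\gta}}$ and establish it via Lemma~\ref{L disug} combined with a bump-function cutoff to globalize the inequality $g^{2m}\leq cf$, invoking Remark~\ref{alike}(3). The only cosmetic differences are that you use the pointwise form of the saturation (with $K=\{x\}$) rather than the compact-set form, and you justify $\overline{\sqrt[\text{\L}]{\gta}}\subseteq\ideal(\zeri(\gta))$ via closedness of $\ideal(\zeri(\gta))$ under point evaluations rather than via the Whitney spectral description of the closure.
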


\begin{proof}

Note that for any ideal $\gtb$ we have

\begin{itemize}

\item   $\overline   \gtb\subset   \ideal(\zeri(\gtb))   $.    Indeed,   $g\in
\overline{\gtb}$ implies $T_x(g)  \in T_x(\gtb)$ for all $x\in  M$. Hence if
$x\in \zeri(\gtb),$ then $T_x(g)$ has  order $\geq 1$ because $T_x(\gtb)$ is
contained  in the  maximal  ideal of  the  ring of  formal  power series  at
$x$. Therefore, $g(x) =0.$

\item  $\zeri( \sqrt[\text{\L}]{\gtb})= \zeri(\gtb)$.

\end{itemize}

So       we       have$       \widetilde       {\sqrt[\text{\L}]{\gta}}\subset
\overline{\sqrt[\text{\L}]{\gta}}\subset  \ideal(\zeri(\gta))  $.  Hence  both
statements   will   be   proved   if  we   prove   $\ideal(\zeri(\gta))\subset
{\widetilde{\sqrt[\text{\L}]{\gta}}}$.

Take $g\in \ideal(\zeri(\gta)),$ and let  $f= f_1^2 =\dots+f_l^2$ be such that
$\zeri(f) =  \zeri(\gta)$. Let $K$  be a compact  set in $M$. By  Lemma \ref{L disug},  $g^{2m}\leq  cf$ on  a  neighborhood  of  $K$. Let  $\varphi_K  \in
\stb(M)$ be a  nonnegative function taking the value $1$ on  $K$ and the value
$0$  outside  the  neighborhood   where  the  inequality  above  holds.  Hence
$(\varphi_Kg)^{2m} \leq c f$ on the whole of $M$, which means $\varphi_K g \in
\sqrt[\text{\L}]{\gta}$. By Remark \ref{alike} (3), we are done.

\end{proof}

\medskip




\section{Weakly \L ojasiewicz ideals}\label{wloji}

\medskip

Looking at the proofs above, we see that the main ingredient was the existence
of a function  $f\in \gta$ that was  the sum of the squares  of the generators
and had the same zero-set as  $\gta,$ making $\gta$ \L ojasiewicz. In the next
lemma, we construct a function with  this property for a more general class of
ideals.

\begin{defn}\label{w L}

An ideal $\gta \subset \stb (M)$ is {\em weakly \L ojasiewicz} if

\begin{enumerate}

\item $\gta$  is locally finitely  generated, that is  for any $x\in  M$ there
exist finitely  many elements in  $\gta$ generating $\gta\,  \stb(U),$ where
$U$ is a suitable neighborhood of $x$;

\item There  exists an element $f\in  \tilde{\gta}$ such that  for any compact
$K\subset  M,$ there  exist a  constant $c$  and an  exponent $m$  such that
$|f(x)| \geq c \, d(x \zeri(\gta))^m$.

\end{enumerate}

\end{defn}

\begin{lem}\label{crespina}

Let  $\gta$  be  a weakly  \L  ojasiewicz  ideal.  Then there  exists  $f\in
\tilde{\gta}$  verifying property  (2) of  Definition \ref  {w L}  such that
$\zeri(f) = \zeri(\gta)$. Moreover, for  any compact set $K\subset M,$ there
exists a neighborhood $U$ of $K$ such that the restriction of $f$ to $U$ belongs
to $\gta \,\stb(U)$.

\end{lem}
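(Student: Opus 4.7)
The plan is to assemble $f$ as a partition-of-unity patching of local sums of squares of generators of $\gta$, and to derive its \L ojasiewicz estimate by comparing it to the given element of $\tilde\gta$ furnished by Definition \ref{w L}(2).

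First I would use Definition \ref{w L}(1) to choose a locally finite open cover $\{U_\alpha\}$ of $M$ by relatively compact sets on each of which $\gta\,\stb(U_\alpha)$ is generated by finitely many elements $f_1^\alpha,\dots,f_{k_\alpha}^\alpha\in\gta$, together with a subordinate nonnegative smooth partition of unity $\{\varphi_\alpha\}$. Setting $f_\alpha:=\sum_{i=1}^{k_\alpha}(f_i^\alpha)^2\in\gta$, I would then define $f:=\sum_\alpha \varphi_\alpha f_\alpha$, a locally finite nonnegative smooth function on $M$.

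The ``easy'' properties would then follow almost at once. For any compact $K\subset M$, only finitely many indices, say $\alpha_1,\dots,\alpha_N$, have $\supp\varphi_{\alpha_j}$ meeting $K$; restricting to an open neighborhood $U$ of $K$ meeting only these supports, $f|_U=\sum_{j=1}^N \varphi_{\alpha_j}f_{\alpha_j}$ is a \emph{finite} $\stb(U)$-combination of elements of $\gta$, hence lies in $\gta\,\stb(U)$. This yields the ``moreover'' clause and, via Remark \ref{alike}(3), also $f\in\tilde\gta$. For the equality $\zeri(f)=\zeri(\gta)$, the inclusion $\supset$ is immediate since each $f_i^\alpha\in\gta$; for $\subset$, if $f(x)=0$ pick $\alpha_0$ with $\varphi_{\alpha_0}(x)>0$, conclude $f_i^{\alpha_0}(x)=0$ for all $i$, and use that these generate $\gta\,\stb(U_{\alpha_0})$ to deduce that every $g\in\gta$ vanishes at $x$.

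The main step is the \L ojasiewicz inequality for $f$. Let $f_0\in\tilde\gta$ be the function of Definition \ref{w L}(2) and fix a compact $K\subset M$. Because $f_0\in\tilde\gta$ and $\gta\,\stb(U_\alpha)=(f_1^\alpha,\dots,f_{k_\alpha}^\alpha)$, a partition-of-unity argument on $U_\alpha$ in the spirit of Remark \ref{alike}(3) produces coefficients $a_i^\alpha\in\stb(U_\alpha)$ with $f_0|_{U_\alpha}=\sum_i a_i^\alpha f_i^\alpha$. Choose a neighborhood $V$ of $K$ with compact closure meeting only the indices $\alpha_1,\dots,\alpha_N$; on each $\overline V\cap U_{\alpha_j}$ the $a_i^{\alpha_j}$ are bounded, so Cauchy--Schwarz yields a constant $C>0$ with $|f_0(x)|^2\le C\,f_{\alpha_j}(x)$ there. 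At each $x\in V$ some $\varphi_{\alpha_j}(x)\ge 1/N$, whence
$$f(x)\;\ge\;\varphi_{\alpha_j}(x)\,f_{\alpha_j}(x)\;\ge\;\tfrac{1}{NC}\,|f_0(x)|^2\;\ge\;\tfrac{c^2}{NC}\,d(x,\zeri(\gta))^{2m}$$
on $V$, using the \L ojasiewicz estimate for $f_0$ on a neighborhood of $\overline V$. The hardest point is precisely the global representation $f_0|_{U_\alpha}=\sum_i a_i^\alpha f_i^\alpha$: germwise membership from $f_0\in\tilde\gta$ is only local, and assembling the local expressions into a single one valid throughout $U_\alpha$ is exactly the partition-of-unity gluing of Remark \ref{alike}(3), now applied inside each chart of the cover.
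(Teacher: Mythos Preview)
Your argument is correct and follows the same strategy as the paper: patch local sums of squares of generators of $\gta$ by a partition of unity. The paper organizes this with an exhaustion $\{K_j\}$ of $M$ by compacts and bump functions $\alpha_j$ supported in $\Int K_{j+1}\setminus K_{j-2}$, setting $f=\sum_j\alpha_j\sum_{i\le i_j}f_i^2$, and then obtains the \L ojasiewicz bound on a given $K\subset K_j$ by observing that $f$ restricted to $V_1\cup\cdots\cup V_j$ is a positive combination of squares of generators of the (finitely generated) \L ojasiewicz ideal $\gta\,\stb(V_1\cup\cdots\cup V_j)$. Your explicit Cauchy--Schwarz comparison with $f_0$ makes this last step more transparent and is essentially what lies behind the paper's appeal to the \L ojasiewicz property of the restricted ideal.

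One small point to tighten: the set $\overline V\cap U_{\alpha_j}$ on which you assert the coefficients $a_i^{\alpha_j}$ are bounded need not be compact (it is the intersection of a compact set with an open one), so the bound can fail there. However, you only use the inequality $|f_0|^2\le C f_{\alpha_j}$ at points where $\varphi_{\alpha_j}(x)\ge 1/N$, and these lie in the compact set $\supp\varphi_{\alpha_j}\subset U_{\alpha_j}$, where boundedness of the $a_i^{\alpha_j}$ is automatic. Stating the bound on $\supp\varphi_{\alpha_j}\cap\overline V$ (or simply on $\supp\varphi_{\alpha_j}$) fixes this with no change to the rest of the argument.
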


\begin{proof}

Since  $\gta$ is locally  finitely generated  we can  assume it  is globally
generated by countably many  smooth functions $\{f_j\}_{j>0}$. Denote by $h$
the function  making $\gta$  a weakly \L  ojasiewicz ideal.  Since  $h_x \in
\gta  \stb_x$,  for   any  $x\in  M$  there  is  $l_x$   such  that  $h_x  =
\sum_{j=1}^{l_x}a_{jx} f_j$ and this  equality holds in a neighborhood $U_x$
of $x$.  Hence if  $K\subset M$ is  a compact  set, there exist finitely many
points  $x_1,  \ldots,  x_s$   such  that  $K\subset  U_{x_1}\cup\dots  \cup
U_{x_s}.$ Take  $\displaystyle l= \max_i\{l_{x_i}\},$ and  let $\{\varphi_i\}$ be  a smooth partition of  unity subordinated  to the covering  of $U  = U_{x_1}\cup\dots
 \cup     U_{x_s}$.    Then    $h     =    (\sum_i\varphi_i)h     =    \sum_i
\varphi_i(\sum_{j=1}^{l}a_{jx_i}                         f_j)=\sum_j^l(\sum_i
\varphi_ia_{jx_i})f_j$. This shows that  $h$ belongs to $\gta\, \stb(U)$ and
that the latter is a \L ojasiewicz ideal.

Next, take  an exhaustion of $M$  by compact sets  $\{K_j\}_{j>0}$ such that
$K_j\subset\Int {K_{j+1}}$ for  every $j \geq 1.$ We  can assume that $\gta$
is generated on a neighborhood  of $K_j$ by $f_i,\ldots , f_{i_j}$. Consider
the  open locally  finite covering  of $M$  given by  $\{ U_j=  \Int K_{j+1}
\setminus K_{j-2}\}_{j  \geq 1},$ where  $K_{-1}= K_0 = \emptyset.$  Let $\{
\alpha_j\}_{j \geq  1}$ be a  collection of smooth  functions $\alpha_j:M\to
[0,1]$ satisfying  that $\alpha_j  =1$ on $K_j  \setminus \Int  K_{j-1}$ and
$\supp  (\alpha_j) \subset  U_j$  for all  $j  \geq 1.$    Note
that  $ \alpha_j  f_1,  \ldots,  \alpha_j f_{i_{j}}$ still generate $\gta$ in a neighborhood $V_j\subset K_{j+1}$ of $K_j \setminus \Int K_{j-1}$ and that $\gta$ is \L ojasiewicz on $V_j$.

Now put

$$f = \sum_{j=1}^{\infty} \alpha_j \Big( \sum _{i=1}^{i_{j}}f_i^2\Big)$$


We get

\begin{enumerate}

\item $f\in \stb (M)$. Indeed, for any $x\in K_j\setminus \Int K_{j-1}
 \subset M, f$ is the sum of 3 summands, $f= \alpha_{j-1}\Big( \sum_{i=1}^{i_{j-1}}f_i^2\Big) + \alpha_j \Big( \sum _{i=1}^{i_{j}}f_i^2\Big) + \alpha_{j+1}\Big( \sum _{i=1}^{i_{j+1}}f_i^2\Big)$.

\item  $f \in  \tilde\gta$. Indeed,  for $x\in  K_j \setminus K_{j-1},$ the  germ $f_x$  belongs  to the  ideal generated  by $f_1,\ldots  ,  f_{i_{j+1}}$, which  generate the ideal $\gta$ on $V_{j+1}.$

\item $f\geq 0$ and $\zeri(f) = \zeri(\gta)$ since this is true locally.

\item  $f$  verifies  the  inequality  of Definition  \ref{w  L}.  Indeed,  if  $K\subset M$ is  a compact set, then $K\subset K_j$ for  some $j$. Hence $f$  belongs to the restriction of $\gta$ to $V_1 \cup \cdots \cup V_j,$ which is  a  \L  ojasiewicz  ideal, and  $f\big|_{V_1  \cup  \cdots  \cup V_j}$  is  a  combination with positive coefficients of the squares of its generators.

\end{enumerate}

\end{proof}

\begin{remark} A weakly \L  ojasiewicz  ideal is locally \L  ojasiewicz, and it is not hard to prove the converse.

\end{remark}

Next, note that Lemma \ref{L disug} and Theorem \ref{main} hold true for a weakly \L ojasiewicz ideal $\gta$ with the same proof, simply replacing $f_1^2+\ldots +f_l^2$ by the function $f$ constructed in Lemma \ref{crespina} above, provided the ideal $\gta$ is saturated as $f\in \tilde \gta$.

Hence we obtain:

\begin{thm}\label{weak L}

  Let $\gta \subset \stb (M)$ be  a saturated weakly \L ojasiewicz ideal. Then  $\ideal(\zeri(\gta  ))= \widetilde  {\sqrt[\text{\L}]{\gta}}$. If  $\gta$ is  not          saturated,          then         $\ideal(\zeri(\gta          ))  =\widetilde{\sqrt[\text{\L}]{\widetilde{\gta }}}$\end{thm}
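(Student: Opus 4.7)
The plan is to mimic the proof of Theorem \ref{main} with the function $f$ produced by Lemma \ref{crespina} playing the role of the sum of squares $f_1^2+\dots+f_l^2$, handling the saturated case first and then reducing the general case to it by passing to $\tilde\gta$.

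For the saturated case, assume $\gta=\tilde\gta$. Lemma \ref{crespina} supplies an $f\in\tilde\gta=\gta$ with $f\geq 0$, $\zeri(f)=\zeri(\gta)$, and the \L ojasiewicz-type inequality $|f(x)|\geq c\,d(x,\zeri(\gta))^m$ near any compact $K$; moreover $f$ agrees on a neighborhood of $K$ with a positive combination of squares of the local generators of $\gta$, so locally $\gta$ is a genuine \L ojasiewicz ideal. The proof of Lemma \ref{L disug} then transfers verbatim: for any $g\in\ideal(\zeri(\gta))$, since $\zeri(g)\supset\zeri(f)$, the same distance-estimate argument yields $g^{2m}\leq cf$ on a neighborhood $U$ of $K$, and now $cf\in\gta$ by saturation. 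Choosing a bump $\varphi_K\in\stb(M)$ equal to $1$ on $K$ and supported in $U$, we get $(\varphi_K g)^{2m}\leq cf$ on all of $M$, so $\varphi_K g\in\sqrt[\text{\L}]{\gta}$. As this holds for every compact $K\subset M$, Remark \ref{alike} (3) gives $g\in\widetilde{\sqrt[\text{\L}]{\gta}}$. The reverse inclusion $\widetilde{\sqrt[\text{\L}]{\gta}}\subset\ideal(\zeri(\gta))$ is exactly the two bullets at the beginning of the proof of Theorem \ref{main}, which depend only on $\zeri(\sqrt[\text{\L}]{\gta})=\zeri(\gta)$ and the Whitney spectral theorem, and therefore still apply.

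For the non-saturated case, I would check that $\tilde\gta$ is itself saturated and weakly \L ojasiewicz with the same zero set as $\gta$. Saturation is idempotent, so $\widetilde{\tilde\gta}=\tilde\gta$. Since at each $x$ one has $\tilde\gta\,\stb_x=\gta\,\stb_x$, the local-finite-generation hypothesis is inherited by $\tilde\gta$, and the function $f$ witnessing condition (2) of Definition \ref{w L} for $\gta$ already lies in $\tilde\gta$ and hence witnesses the same condition for $\tilde\gta$. Finally $\zeri(\tilde\gta)=\zeri(\gta)$ because $\gta\subset\tilde\gta\subset\overline\gta\subset\ideal(\zeri(\gta))$ by Lemma \ref{satclos}. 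Applying the saturated case to $\tilde\gta$ yields $\ideal(\zeri(\gta))=\ideal(\zeri(\tilde\gta))=\widetilde{\sqrt[\text{\L}]{\tilde\gta}}$, as claimed.

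The only delicate step is the first one: verifying that the proof of Lemma \ref{L disug} really does go through with $f$ obtained from a countable locally finite partition of unity rather than from a finite sum of squares. This is precisely what the second conclusion of Lemma \ref{crespina} is designed for—on a neighborhood of every compact $K$, $f$ coincides with an element of the finitely-generated \L ojasiewicz restriction of $\gta$—so the localized distance estimate still produces an inequality of the form $g^{2m}\leq cf$ on $U$, with $cf$ belonging to $\gta$ thanks to saturation. Once this point is in place, the rest of the argument is the same bookkeeping with bump functions and Remark \ref{alike} (3) as in Theorem \ref{main}.
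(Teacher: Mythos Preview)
Your proposal is correct and follows essentially the same approach as the paper: the paper's proof is precisely the one-sentence remark preceding the theorem, namely that Lemma~\ref{L disug} and Theorem~\ref{main} go through verbatim once $f_1^2+\dots+f_l^2$ is replaced by the function $f$ from Lemma~\ref{crespina}, the saturation hypothesis being needed because that lemma only places $f$ in $\tilde\gta$. For the non-saturated case you take a slight detour by first checking that $\tilde\gta$ is itself saturated and weakly \L ojasiewicz and then invoking the saturated case; the paper (implicitly) proceeds more directly by simply running the same bump-function argument with $cf\in\tilde\gta$, which immediately yields $\varphi_K g\in\sqrt[\text{\L}]{\tilde\gta}$ and hence $g\in\widetilde{\sqrt[\text{\L}]{\tilde\gta}}$, without needing to re-verify local finite generation for $\tilde\gta$.
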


\medskip




\section{Consequences}\label{conseq}

\medskip

\subsection{Answering a modification of the Bochnak conjecture}\label{modboc}

We now want to relate the notion of being \L ojasiewicz  with convexity.

We say that an ideal $\gta$ of  $\stb (M)$ is \em convex \em if each $g\in\stb(M)$  satisfying  $|g|\leq  f$  for  some $f\in\gta$  belongs  to  $\gta$.  In particular, the  \L ojasiewicz's radical $\sqrt[\text{\L}]{\gta}$  of an ideal $\gta$ of $\stb  (M)$ is a radical convex ideal. Moreover,  we define the {\em  convex hull} $\gtg(\gta)$ of an ideal $\gta$ of $\stb (M)$ by $$\gtg(\gta):=\{g\in\stb  (M)\, | \, \exists f\in\gta\:\text{such that}\:|g|\leq f\}.$$ Note  that  $\gtg(\gta)$ is  the  smallest convex  ideal  of  $\stb (M)$  that contains $\gta$ and $\sqrt[\text{\L}]{\gta}:=\sqrt{\gtg(\gta)}$.

Hence if  $\gta$ is convex  and radical, it  coincides with its  \L ojasiewicz
radical, and we immediately get:

\begin{cor}

If the  ideal $\gta\subset\stb (M)$ is  a (weakly) \L  ojasiewicz ideal, the
following are equivalent

\begin{enumerate}

\item $\gta$ has the zero property.

\item $\gta$ is closed, convex, and  radical.

\end{enumerate}

\end{cor}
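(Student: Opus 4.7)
The plan is to establish the two implications separately, using the main theorems of the paper to handle the nontrivial direction. For the implication (1) $\Rightarrow$ (2), no special structure of $\gta$ is needed, since the three properties hold for any ideal of the form $\ideal(Z)$ with $Z\subset M$: closure in the compact-open topology is immediate because a pointwise limit of functions vanishing on $\zeri(\gta)$ still vanishes there; the radical property follows because $g^n\in\ideal(\zeri(\gta))$ forces $g=0$ on $\zeri(\gta)$; convexity is equally elementary, since $|g|\leq f$ with $f$ vanishing on $\zeri(\gta)$ forces $g=0$ on $\zeri(\gta)$.

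For the implication (2) $\Rightarrow$ (1), I would first exploit the identity $\sqrt[\text{\L}]{\gta}=\sqrt{\gtg(\gta)}$ recalled in the paragraph preceding the statement. Convexity gives $\gtg(\gta)=\gta$, and being radical gives $\sqrt{\gta}=\gta$, so the \L ojasiewicz radical of $\gta$ coincides with $\gta$ itself. Closedness combined with Lemma \ref{satclos} yields $\gta\subset\tilde\gta\subset\overline\gta=\gta$, so $\gta$ is saturated — the condition needed to apply Theorem \ref{weak L} in the weakly \L ojasiewicz case. It then suffices to invoke the main theorems: in the \L ojasiewicz case, Theorem \ref{main} gives
$$\ideal(\zeri(\gta))=\overline{\sqrt[\text{\L}]{\gta}}=\overline{\gta}=\gta;$$
in the weakly \L ojasiewicz case, Theorem \ref{weak L} applied to the saturated ideal $\gta$ gives $\ideal(\zeri(\gta))=\widetilde{\sqrt[\text{\L}]{\gta}}=\widetilde{\gta}=\gta$.

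There is no serious obstacle here: the substantive work has already been carried out in Theorem \ref{main} and Theorem \ref{weak L}, and this corollary amounts to recognizing that the hypothesis of convexity-plus-radicality collapses the \L ojasiewicz radical back onto $\gta$ itself. The only mild point of care is verifying saturation before invoking Theorem \ref{weak L}, which, as noted, is a direct consequence of closedness via Lemma \ref{satclos}.
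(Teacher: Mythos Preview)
Your proof is correct and follows the same approach as the paper, which simply observes that convexity plus radicality forces $\gta=\sqrt[\text{\L}]{\gta}$ and then invokes the main theorem. You have spelled out more detail than the paper does---in particular, the paper does not explicitly treat the direction (1)~$\Rightarrow$~(2), nor does it mention the saturation check needed before applying Theorem~\ref{weak L} in the weakly \L ojasiewicz case---so your version is in fact a useful amplification of the one-line argument given there.
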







\medskip





\subsection{Recovering the Bochnak and Adkins-Leahy Nullstellensatz results}\label{balnss}

To compare our  results with the ones by Bochnak and  Adkins-Leahy, we have to relate  \L ojasiewicz  radicals  with  real radicals  of  ideals generated  by
analytic functions.  So assume $M$ is  an analytic manifold  and $\gta \subset
\stb (M)$ is generated by analytic  functions. It follows that the zero-set of
$\gta$  is  a  global  analytic   set  $X$  and  $\gta$  is  locally  finitely
generated. Furthermore,  there exists an analytic function  $f\in \tilde \gta$
whose  zero  set   is  $X,$  making  $\gta$  a   weakly  \L  ojasiewicz  ideal
(\cite{ABF}).

\bigskip

\begin{thm}\label{comp}

 Let $M$ be an analytic manifold and $\gta \in \an (M)$ be an ideal of real analytic functions. Then $$\widetilde{\sqrt[\text{\L}]{\widetilde{\gta \stb (M)}}} =\overline{\sqrt[\text{r}]{\gta \stb(M)}}$$.

 \end{thm}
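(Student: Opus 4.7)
The plan is to use Theorem \ref{weak L} to rephrase the statement and then prove the two inclusions. By the paragraph preceding the statement (invoking \cite{ABF}), the ideal $\gta\stb(M)$ is weakly \L ojasiewicz, so Theorem \ref{weak L} yields $\widetilde{\sqrt[\text{\L}]{\widetilde{\gta\stb(M)}}}=\ideal(\zeri(\gta\stb(M)))$. Since $\sqrt[\text{r}]{\gta\stb(M)}\subset\ideal(\zeri(\gta\stb(M)))$ (a relation $g^{2m}+\sum h_i^2\in\gta\stb(M)$ forces $g$ to vanish on $\zeri(\gta\stb(M))$) and the latter is closed in the compact-open topology, the inclusion $\overline{\sqrt[\text{r}]{\gta\stb(M)}}\subset\widetilde{\sqrt[\text{\L}]{\widetilde{\gta\stb(M)}}}$ is free. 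Thus everything reduces to proving the reverse inclusion $\ideal(\zeri(\gta\stb(M)))\subset\overline{\sqrt[\text{r}]{\gta\stb(M)}}$.

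Mimicking the proof of Theorem \ref{main}, I would start with $g\in\ideal(\zeri(\gta\stb(M)))$ and, for each compact $K\subset M$, pick a bump $\varphi_K\in\stb(M)$ equal to $1$ on $K$ with compact support, so that $\varphi_K g\to g$ in the compact-open topology as $K$ exhausts $M$; it then suffices to show $\varphi_K g\in\sqrt[\text{r}]{\gta\stb(M)}$. The idea is to produce an explicit sum-of-squares identity modulo $\gta\stb(M)$. To that end I would revisit Lemma \ref{crespina} and take the partition-of-unity factors as squares $\alpha_j=\beta_j^2$ of smooth nonnegative functions (the support conditions are preserved). The associated function $f=\sum_j\beta_j^2\sum_{i\leq i_j} f_i^2=\sum_{i,j}(\beta_j f_i)^2$ is then a locally finite sum of squares of smooth functions. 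Multiplying by a further square cutoff $\chi=\xi^2$, with $\xi\in\stb(M)$ smooth, $\chi\equiv 1$ on $\supp\varphi_K$ and $\supp\chi$ compact, turns $\tilde f:=\chi f=\sum_{i,j}(\xi\beta_j f_i)^2$ into a genuinely finite element of $\gta\stb(M)$ that is simultaneously a sum of squares of smooth functions. The weakly \L ojasiewicz analogue of Lemma \ref{L disug} then supplies $c>0$ and $m\in\N$ such that $c\tilde f-(\varphi_K g)^{2m}\geq 0$ on all of $M$.

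The main obstacle is to represent the nonnegative smooth function $c\tilde f-(\varphi_K g)^{2m}$ as a finite sum of squares of smooth functions; such a representation would give
$$(\varphi_K g)^{2m}+\sum_\ell h_\ell^2=c\tilde f\in\gta\stb(M),$$
exhibiting $\varphi_K g\in\sqrt[\text{r}]{\gta\stb(M)}$. In general, nonnegative $C^\infty$ functions do not admit such finite SOS decompositions, so the analyticity of the generators $f_i$ must be used decisively at this step. One route is to approximate $\varphi_K g$ by a real analytic function vanishing on the analytic set $\zeri(\gta)$ and invoke the real analytic Nullstellensatz of Bochnak and Risler to produce an SOS identity modulo $\gta$ in the analytic ring which then transfers to $\stb(M)$; an alternative is to exploit directly the algebraic shape of $c\tilde f$ as a sum of squares of smooth multiples of analytic generators to build the decomposition. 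Letting $K$ exhaust $M$ and concatenating the resulting $\varphi_K g$ yields a sequence in $\sqrt[\text{r}]{\gta\stb(M)}$ that converges to $g$ in the compact-open topology, giving $g\in\overline{\sqrt[\text{r}]{\gta\stb(M)}}$ and completing the proof.
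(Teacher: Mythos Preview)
Your reduction is correct: via Theorem~\ref{weak L} the statement is equivalent to showing $\ideal(\zeri(\gta\stb(M)))\subset\overline{\sqrt[\text{r}]{\gta\stb(M)}}$, and the easy inclusion is indeed free. The gap is exactly where you flag it: the sum-of-squares step. A nonnegative smooth function of the form $c\tilde f-(\varphi_K g)^{2m}$ need not be a finite sum of squares in $\stb(M)$, and neither of your two suggested routes closes this. The first route presupposes a \emph{global} real-analytic Nullstellensatz producing an SOS identity modulo $\gta$ in $\an(M)$; Risler's result is for the local ring of germs $\an_x$, and the global statement you would need is essentially the Adkins--Leahy theorem you are trying to recover, so the argument becomes circular. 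The second route (``exploit the algebraic shape of $c\tilde f$'') gives no mechanism for absorbing the subtracted term $(\varphi_K g)^{2m}$ into the SOS structure.

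The paper avoids this obstruction entirely by working pointwise at the level of Taylor series rather than seeking a global SOS identity. For each $x\in M$ one shows, using Risler's local Nullstellensatz $\sqrt[\text{\L}]{\gta\an_x}=\sqrt[\text{r}]{\gta\an_x}$ together with Malgrange's theorem $\ideal^{\Fhaz_x}(X_x)=\sqrt[\text{r}]{\gta\an_x}\,\Fhaz_x$, that $T_x g$ lies in $T_x\big(\sqrt[\text{r}]{\gta\stb(M)}\big)$. Whitney's spectral theorem then gives $g\in\overline{\sqrt[\text{r}]{\gta\stb(M)}}$ directly. The point is that the SOS identities only need to exist at the level of germs (where Risler supplies them), and the passage to the closure is handled by the Taylor map, not by a global decomposition. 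If you want to salvage your outline, the missing ingredient is precisely this localize--Malgrange--Whitney package; without it the SOS step does not go through.
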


\begin{proof}

Let $X= \zeri(\gta),$ and consider the ideal $\sqrt[\text{\L}]{\gta}\subset \an (M)$. We have the following:

\begin{enumerate}\parindent = 0pt

\item      [$\bullet$]      $     (\sqrt[\text{\L}]{\gta})(\an_x)      \subset
\sqrt[\text{\L}]{\gta\an_x}  =\sqrt[\text{r}]{\gta\an_x}.$ Indeed,  if $g\in
(\sqrt[\text{\L}]{\gta})(\an_x),$ then $g =\sum_i  h_i a_i,$ where $ h_i \in
\an_x$ and $ a_i \in \sqrt[\text{\L}]{\gta}$. Hence $a_i^{2m_i} \leq c_i f,$
for some $f\in \gta,$ and so $(h_ia_i)^{2m_i} \leq c'_i f,$ for $f\in \gta,$
which   means    $h_ia_i   \in   \sqrt[\text{\L}]{\gta\an_x}$.    So   $g\in
\sqrt[\text{\L}]{\gta\an_x}$. Since  the \L ojasiewicz  radical contains the
real radical, the last equality is the Risler Nullstellensatz in the ring of
germs of analytic functions; see \cite{rislerannss}.

\item               [$\bullet$]$              (\sqrt[\text{\L}]{\gta})(\stb_x)
\subset\sqrt[\text{\L}]{\gta\stb_x}$ by the same argument as above.

\item[$\bullet$]  $ \sqrt[\text{\L}]{\gta\stb_x}  = (\sqrt[\text{\L}]{\gta\stb
(M)}\,)_x$.  Indeed if  $\varphi \in  \gta \stb_x,$  then $\varphi  = \sum
\varphi_i a_i,$ for  $\varphi_i \in \stb_x$ and $ a_i  \in \gta$. This holds
true in an open neighborhood $U$  of $x$. Take a smooth bump function $\psi$
such that $\psi=1$ in a smaller neighborhood and its support is contained in
$U$. Then $\psi\varphi = \sum(\psi \varphi_i)a_i \in \gta\, \stb(M)$ and its
germ at $x$ is precisely $\varphi$.

\end{enumerate}

So    far    we     have    obtained

$$    (\sqrt[\text{\L}]{\gta})(\stb_x)
\subset\sqrt[\text{\L}]{\gta\stb_x}  = (\sqrt[\text{\L}]{\gta\stb (M)}\,)_x.$$

Now  apply  the Taylor  homomorphism  at  $x$.

$$ T_x  (\sqrt[\text{\L}]{\gta}
\stb_x)       =       (\sqrt[\text{\L}]{\gta})_x      \Fhaz_x\subset       T_x
(\sqrt[\text{\L}]{\gta\stb   (M)})_x  \subset\sqrt[\text{r}]{\gta\an_x}\Fhaz_x
.$$

It  is worth  noting  here that  we  identify the  Taylor  series of  analytic
functions with the  corresponding germs. The last inclusion  holds because the
elements of $(\sqrt[\text{\L}]{\gta\stb (M)})_x$  vanish on $X_x,$ hence their
Taylor       series       belong       to       $\ideal^{\Fhaz_x}(X_x)       =
\sqrt[\text{r}]{\gta\an_x}\,\Fhaz_x$  by   Malgrange's  theorem  \cite[Thm.3.5
p.90]{malgrange}. 
Arguing  as  in  \cite{al2},  $\sqrt[\text{r}]{\gta  \an_x}
\subset\sqrt[\text{r}]{\gta \stb_x}=(\sqrt[\text{r}]{\gta  \stb (M)}\,)_x$,
hence the last inclusion is an equality.

We have now finished  making all preliminary observations and  are ready to 
prove the theorem. 
Consider  $g \in \widetilde{\sqrt[\text{\L}]{\widetilde{\gta \stb
(M)}}}$. For any compact set $K\subset M,$ there is an open neighborhood
$U$  of $K$ such  that $g$  belongs to  $\sqrt[\text{\L}]{\widetilde{\gta \stb
(U)}}$.  It  follows  $g^{2m}  \leq  cf$  for $  f  \in  \widetilde  {\gta
\stb(U)}$. In turn, on a smaller neighborhood $V\subset U$ of $K,$ we get $f
\in  \gta\, \stb(V)$.  This means  that for  any  $x \in  M,$ the  germ $g_x$
belongs      to       $\sqrt[\text{\L}]{\gta\,\stb_x}$,      which      equals
$(\sqrt[\text{\L}]{\gta\,\stb (M)}\,)_x$. Applying  what was stated before, we
see    that   $T_x    g    \in   T_x(\sqrt[\text{\L}]{\gta\stb    (M)})\subset
(\sqrt[\text{r}]{\gta    \stb    (M)}\,)_x$,     which    implies    $g    \in
\overline{\sqrt[\text{r}]{\gta\stb  (M)}}$. The  reverse inclusion  comes from
the  fact that  the \L  ojasiewicz radical  is bigger  than the  real  one and
analytic-like by Theorem \ref{main}.

\end{proof}

As a consequence of Theorems \ref{main} and \ref{comp}, we recover the result of Adkins and Leahy in \cite{al2}. As for Bochnak's result, note that a finitely generated analytic ideal $\gta\subset \stb(M) $ is closed and if it is real, it coincides with its real radical.  Hence it has the zero property.

\thispagestyle{empty}

\bibliographystyle{amsalpha}

\bibliography{SmoothNSS}

\thispagestyle{empty}

\end{document}